\newtheorem{theorem}{Theorem}[section]
\newtheorem{proposition}{Proposition}[section]
\newtheorem{remark}{Remark}[section]
\newtheorem{definition}{Definition}[section]
\newcommand{\de}{\mathrm{d}}
\newcommand{\bb}{\mathbb}
\newcommand{\rr}{\mathbb{R}}
\newcommand{\cor}{\mathcal}
\begin{document}

\begin{frontmatter}
\pretitle{Research Article}

\title{Stochastic models associated to a Nonlocal Porous Medium Equation}

\author{\inits{A.}\fnms{Alessandro}~\snm{De Gregorio}\ead[label=e1]{alessandro.degregorio@uniroma1.it}\orcid{0000-0003-0809-6046}}
\address{Dipartimento di Scienze Statistiche,
\institution{``Sapienza'' University of Rome},
P.le Aldo Moro, 5 - 00185, Rome, \cny{Italy}}



\markboth{A. De Gregorio}{Stochastic models associated to a Nonlocal
Porous Medium Equation}

\begin{abstract}
The nonlocal porous medium equation considered in this paper is a
degenerate nonlinear evolution equation involving a space
pseudo-differential operator of fractional order. This space-fractional
equation admits an explicit, nonnegative, compactly supported weak
solution representing a probability density function. In this paper we
analyze the link between isotropic transport processes, or random
flights, and the nonlocal porous medium equation. In particular, we
focus our attention on the interpretation of the weak solution of the
nonlinear diffusion equation by means of random flights.
\end{abstract}
\begin{keywords}
\kwd{Anomalous diffusions}
\kwd{finite speed of propagation}
\kwd{fractional gradient}
\kwd{random flights}
\end{keywords}
%

\received{\sday{24} \smonth{4} \syear{2018}}
\revised{\sday{19} \smonth{7} \syear{2018}}
\accepted{\sday{24} \smonth{8} \syear{2018}}
\publishedonline{\sday{19} \smonth{9} \syear{2018}}
\end{frontmatter}

\section{Introduction}

We deal with a Nonlocal Porous Medium Equation (NPME) studied in \cite
{biler0,biler}, given by the following degenerate nonlinear and
nonlocal evolution equation
\begin{equation}
\label{eq:nonloceq} \partial_t u=\text{div} \bigl(|u|\nabla^{\alpha-1}
\bigl(|u|^{m-2}u\bigr) \bigr),\quad m>1,\,\alpha\in(0,2],\,t>0,
\end{equation}
subject to the initial condition
\begin{equation}
\label{initcon} u(x,0)=u_0(x),
\end{equation}
where $u:=u(x,t)$, with $x:=(x_1,\ldots,x_d)\in\bb R^d,d\geq1$, is a
scalar function defined on $\rr^d\times\rr^+$ and $\partial_t:=\partial
/\partial t$. The pseudo-differential operator $\nabla^{\alpha-1}$ is
the fractional gradient denoting the nonlocal operator defined as
$\nabla^{\alpha-1}u:=\cor F^{-1}(i\xi||\xi||^{\alpha-2}\cor Fu)$,
where the Fourier transform $\cor F$ and the inverse transform $\cor
F^{-1}$ of a function $v\in L^1(\bb R^d)$ are defined by
\[
\cor F v(\xi)=\frac{1}{(2\pi)^{d/2}}\int_{\bb R^d} v(\xi)
e^{i x\cdot
\xi}\de x,\quad\cor F^{-1} v(\xi)=\frac{1}{(2\pi)^{d/2}}\int
_{\bb R^d} v(\xi) e^{-i x\cdot\xi}\de x,
\]
with $\xi\in\bb R^d$. This notation highlights that $\nabla^{\alpha-1}$
is a pseudo-differential (vector-valued) operator of order $\alpha-1$.
Equivalently, we can define $\nabla^{\alpha-1}$ as $\nabla(-\Delta
)^{\frac{\alpha}2-1}$, where $(-\Delta)^{\frac{\alpha}2}u=\cor F^{-1}(||\xi
||^\alpha\cor Fu)$ is the fractional Laplace operator; i.e. a Fourier
multiplier 
with the symbol $||\xi||^\alpha$. We observe that
$\nabla^1=\nabla$ is the classical gradient and that div$(\nabla^{\alpha
-1})=\nabla^{\frac{\alpha}2}\cdot\nabla^{\frac{\alpha}2}=-(-\Delta)^{\frac{\alpha}2}$.
Another equivalent definition of the fractional gradient
$\nabla^{\alpha-1}$ involves the Riesz potential; that is $\nabla
^{\alpha-1}=\nabla I_{2-\alpha}$ where $I_\beta=(-\Delta)^{-\frac{\beta}
2}$ is a Fourier multiplier with symbol $||\xi||^{-\beta},\beta\in
(0,2)$ (for more details on this point see \cite{biler0,biler}).

In \cite{biler0,biler}, explicit and compactly supported nonnegative
self-similar solutions of \eqref{eq:nonloceq} are constructed. These
explicit solutions generalize the well-known
Barenblatt--Kompanets--Zel'dovich--Pattle solutions of the porous medium
equation \eqref{eq:mpme} below. Furthermore, the authors proved the
existence of sign-changing weak solution to the Cauchy problem \eqref
{eq:nonloceq}--\eqref{initcon} for $u_0(x)\in L^1(\rr^d)$, and the
hypercontractivity $L^1\mapsto L^p$ estimates.

By exploiting Darcy's law, it is possible to interpret the equation
\eqref{eq:nonloceq} as a transport equation $\partial_t u=\text
{div} (|u|{\bf v} )$, where ${\bf v}:=\nabla{\bf p}:=\nabla
I_{2-\alpha}(|u|^{m-2}u)$ is a vector velocity field with nonlocal and
nonlinear pressure ${\bf p}$ in the case of nonnegative initial data.
We observe that the fractional operator $\nabla I_{2-\alpha}$
represents the long-range diffusion effects. The one-dimensional version
of the pseudo-differential equation \eqref{eq:nonloceq} describes the
dynamics of dislocations in crystals (see \cite{bilerkarch}).

For $\alpha=2$, \eqref{eq:nonloceq} becomes the classical nonlinear
porous medium equation
\begin{equation}
\label{eq:pme} \partial_t u=\text{div} \bigl(|u|\nabla
\bigl(|u|^{m-2}u\bigr) \bigr)=\text {div} \bigl((m-1)|u|^{m-1}
\nabla u \bigr).
\end{equation}
If we restrict our attention to nonnegative solution $u(x,t)$, the
equation \eqref{eq:pme} becomes
\begin{equation}
\label{eq:mpme} \partial_t u=\frac{m-1}{m}\Delta
\bigl(u^m\bigr),
\end{equation}
which is usually adopted to model the flow of a gas through a porous medium.
The reader interested in the theory of porous medium equation can
consult, for instance, \cite{vazquez}.

Other types of nonlocal porous medium equations have been proposed in
literature. For instance, \cite{caff,caff2} introduced the porous
medium equation with fractional diffusion effects
\begin{equation}
\label{eq: caff-vaz} \partial_t u=\text{div}(u\nabla p),
\end{equation}
with nonlocal pressure $p:=(-\Delta)^{-s}u, 0<s<1$, and $u\geq0$. For
$\alpha=2-2s\in(0,2)$ we obtain the equation \eqref{eq:nonloceq} with
$m=2$; i.e. $\partial_t u=\text{div}(u\nabla^{\alpha-1}u)$. In \cite
{stan} the nonlinear diffusion equation \eqref{eq: caff-vaz} is
generalized as follows
\begin{equation}
\label{eq:stan} \partial_t u=\text{div}\bigl(u^{m-1}\nabla p
\bigr),\quad u(x,t)\geq0, x\in \mathbb R^d,t>0,
\end{equation}
with $m>1$ and initial condition $u(x,0)=u_0(x)$ which is nonnegative
bounded with compact support or fast decaying at infinity. The main
contribution in \cite{stan} concerns the study of the property of
finite/infinite speed of propagation of the solutions to \eqref
{eq:stan} with varying $m$.

The following equation
\begin{equation}
\label{eq: fracvazq}\partial_t u=-(-\Delta)^{\frac{\alpha}
2}
\bigl(|u|^{m-1}u\bigr),\quad\alpha\in(0,2),
\end{equation}
is studied in \cite{vazquez2}, where it is also proved that the
self-similar solutions of \eqref{eq: fracvazq} enjoy the
$L^1$-contraction property and then they are unique. Nevertheless,
these solutions are not compactly supported. Explicit self-similar
solutions to \eqref{eq:stan} and \eqref{eq: fracvazq} have been
obtained by \cite{huang} for particular values of $m$.

The main goal of this paper is to investigate the relationship between
\eqref{eq:nonloceq} and some random models. In particular, we focus our
attention to the probabilistic interpretations of the weak solution to
NPME. The idea to study stochastic processes associated to the
classical porous medium equation \eqref{eq:mpme} was developed by
different authors; see, for instance, \cite{inoue,inoue2,inoue3,ekhaus,feng,jou,phil}.
In the
listed papers the authors
introduced different types of Markov chains on lattice and interacting
particle systems having a dynamic which macroscopically converges to
the solution of \eqref{eq:mpme}. By \cite{getoor}, the Barenblatt
solution of \eqref{eq:mpme} can be viewed as the mean of the first
passage time of a symmetric stable process to exterior of a ball. In
\cite{ben}, the authors provided a probabilistic interpretation of
\eqref{eq:mpme} in terms of stochastic differential equations.
Recently, \cite{dgo2} highlighted the connection between \eqref
{eq:mpme} and the Euler--Poisson--Darboux equations by taking into
account time-rescaled random flights.

Up to our knowledge, this paper is the first attempt concerning the
probabilistic interpretation of the fractional porous medium equation
\eqref{eq:nonloceq}. Similarly to \cite{dgo2}, we can exploit
stochastic models defined by continuous-time random walks in $\rr
^d,d\geq1$, arising in the description of the displacements of a
particle choosing uniformly its directions; i.e. the so-called
isotropic transport processes or random flights. In a suitable
time-rescaled frame, the probability law of the above processes is
given by the solution \eqref{eq:weaksol} below. Therefore, this paper
represents a generalizations of some results contained in \cite{dgo2}.
We point out that the proposed random processes recover some features
of the Barenblatt weak solution \eqref{eq:weaksol} to nonlinear
evolution equations like finite speed of propagation and the anomalous
diffusivity. For this reason the random flights seem to represent a
natural way to describe the real phenomena studied by means of \eqref
{eq:nonloceq}.

In Section \ref{sec:weak}, we recall the definition of weak solution
to \eqref{eq:nonloceq} as well as its basic properties. In Section \ref
{sec:rf} the isotropic transport processes are introduced. Furthermore,
Section \ref{sec:rf} contains our main results; i.e. Propositions \ref
{teo:sppme}, \ref{teo:radial}. From these propositions we are
able to give a reasonable interpretation of the solutions to \eqref
{eq:nonloceq}. In the last section we sum up the main contribution of
the paper.

\section{A review on the weak solutions to NPME}\label{sec:weak}
Let us recall the definition of weak solution to the nonlocal operator
equation \eqref{eq:nonloceq} and its main properties (see \cite{biler}).
\begin{definition}\label{def}
A function $u: \bb R^d\times(0,T)\to\bb R$ is a weak solution to the
Cauchy problem \eqref{eq:nonloceq}--\eqref{initcon} in $\bb R^d\times
(0,T)$ if $u\in L^1(\bb R^d\times(0,T)), \nabla^{\alpha
-1}(|u|^{m-2}u)\in L_{loc}^1(\bb R^d\times(0,T))$, $|u|\nabla^{\alpha
-1}(|u|^{m-2}u)\in L_{loc}^1(\bb R^d\times(0,T))$ and
\[
\int_{\bb R^d}\int_0^T \bigl(u
\partial_t \varphi-|u|\nabla^{\alpha
-1}\bigl(|u|^{m-2}u
\bigr)\cdot\nabla\varphi \bigr)\de t\de x+\int_{\bb
R^d}u_0(x)
\varphi(x,0)\de x=0,
\]
where $\varphi\in C^\infty(\bb R^d\times(0,T))\cap C^1(\overline{\bb
R^d\times(0,T)})$
has a compact support in the
space variable $x$ and vanishes near $t=T$.
\end{definition}

Let $(x)_+:=\max(x,0)$. The following theorem, proved in \cite{biler},
represents our starting point.
\begin{theorem}
Let $\alpha\in(0,2]$ and $m>1$. A weak solution in the sense of
Definition \ref{def} in $(\eta,T)\times\bb R^d$, for every $0<\eta
<T<\infty$, is given by the function $u:\bb R^d\times(0,T)\to[0,\infty
)$ defined as
\begin{align}
\label{eq:weaksol} u(x,t)= Ct^{-d\beta} \biggl(1-k^{\frac{2}\alpha}
\frac{||x||^2}{t^{2\beta}} \biggr)_+^{\frac{\alpha}{2(m-1)}},
\end{align}
where $\beta:=\beta(\alpha,d,m):=\frac{1}{d(m-1)+\alpha}$,
\begin{align*}
k&:=k(\alpha, d):=\frac{d\varGamma(d/2)}{(d(m-1)+\alpha)2^\alpha\varGamma
(1+\frac{\alpha}2)\varGamma(\frac{d+\alpha}{2})},\\
C&:=C(\alpha,d,m):=\frac{\varGamma(\frac{d}2+\frac{\alpha}{2(m-1)}+1)k^{\frac
{ d}{\alpha}}}{\pi^{\frac{d}2}\varGamma(\frac{\alpha}{2(m-1)}+1)}.
\end{align*}
Furthermore, $u(x,t)$ is the pointwise solution of the equation \eqref
{eq:nonloceq} for $||x||\neq\mathrm{c} t^\beta$ and is $\min\{\frac
{\alpha}{m-1},1\}$-H\"older continuos at $||x||=\mathrm{c} t^\beta$,
where $\mathrm{c}:=\mathrm c(\alpha,d):=1/k^{\frac{1}\alpha}$.
\end{theorem}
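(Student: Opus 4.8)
The plan is to verify the candidate \eqref{eq:weaksol} by exploiting its self-similar structure together with the Darcy (transport) reformulation recalled in the Introduction. Since the profile is nonnegative we may write $|u|^{m-2}u=u^{m-1}$, so that the equation reads $\partial_t u=\text{div}\bigl(u\,\nabla I_{2-\alpha}(u^{m-1})\bigr)$. I would first insert the self-similar ansatz $u(x,t)=t^{-d\beta}\Phi(xt^{-\beta})$ with $\Phi(y)=C(1-k^{2/\alpha}||y||^2)_+^{\alpha/(2(m-1))}$, and record that $u^{m-1}(x,t)=C^{m-1}t^{-d\beta(m-1)}(1-k^{2/\alpha}||y||^2)_+^{\alpha/2}$; i.e. the pressure-generating quantity is, up to constants and scaling, the profile $(1-||y||^2)_+^{\alpha/2}$. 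A scaling count then pins down the exponent $\beta$: the time derivative scales like $t^{-d\beta-1}$, whereas the spatial term carries the factor $t^{-d\beta-d\beta(m-1)-\beta\alpha}$ (the multiplicative factor $u$ contributes $-d\beta$, the factor $u^{m-1}$ contributes $-d\beta(m-1)$, and the operator $\nabla^{\alpha-1}$ together with the outer divergence contributes $-\beta\alpha$), so balancing the two powers yields exactly $\beta=1/(d(m-1)+\alpha)$.

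The crux, and the step I expect to be the main obstacle, is the evaluation of the nonlocal pressure on the profile. I would establish the identity that the Riesz potential $I_{2-\alpha}=(-\Delta)^{\alpha/2-1}$ applied to $(1-||y||^2)_+^{\alpha/2}$ is, inside the support $||y||<1$, an affine function of $||y||^2$:
\[
I_{2-\alpha}\bigl[(1-||y||^2)_+^{\alpha/2}\bigr](y)=a_{d,\alpha}-b_{d,\alpha}||y||^2,\qquad ||y||\le1,
\]
with explicit constants $a_{d,\alpha},b_{d,\alpha}$ expressible through Gamma functions. This is the precise analytic content that makes a Barenblatt-type profile possible: it is the fractional analogue of the elementary fact that the pressure for \eqref{eq:mpme} is quadratic, and it can be derived either from the classical Getoor--Dyda computation of $(-\Delta)^{s}$ on powers of $(1-||y||^2)_+$ or directly from the hypergeometric representation of Riesz potentials of such radial powers, the point being that the chosen pairing of exponents (power $\alpha/2$ for the profile against potential order $2-\alpha$) collapses the hypergeometric series to a first-degree polynomial in $||y||^2$. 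Taking the gradient then produces a velocity field $\nabla I_{2-\alpha}(u^{m-1})$ that is \emph{linear} in $x$ on the support, proportional to $b_{d,\alpha}\,x$ times the appropriate power of $t$.

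With a linear velocity in hand the remaining verification is routine. I would compute $\partial_t u$ from the self-similar form and expand $\text{div}\bigl(u\,\nabla I_{2-\alpha}(u^{m-1})\bigr)=\nabla u\cdot\mathbf v+u\,\text{div}\,\mathbf v$ with $\mathbf v$ linear, then match the two sides on $\{||x||<\mathrm ct^\beta\}$; the linear-in-$y$ structure makes both sides proportional to $C t^{-d\beta-1}$ times the same elementary expression in $||y||^2$, and the proportionality forces the displayed value of $k$, after which $C$ is fixed by normalizing $\Phi$ to unit mass through the Beta integral $\int_{\bb R^d}(1-||y||^2)_+^{\lambda}\de y$. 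This gives the pointwise identity for $||x||\neq\mathrm ct^\beta$. It then remains to treat the free boundary and the weak formulation: near $||x||=\mathrm ct^\beta$ the factor $1-k^{2/\alpha}||x||^2/t^{2\beta}$ vanishes linearly in the distance to the sphere, so $u$ tends to zero like a power of that distance and is accordingly Hölder continuous across the free boundary with the exponent stated in the theorem. This regularity, together with the compact support and the interior equation, is exactly what is needed to integrate by parts against a test function $\varphi$ with no boundary contribution, confirming that \eqref{eq:weaksol} is a weak solution in the sense of Definition \ref{def}.
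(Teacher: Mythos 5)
Before anything else, note that this paper does not prove the theorem at all: it is imported verbatim from Biler--Imbert--Karch (``The following theorem, proved in \cite{biler}, represents our starting point''), so the only meaningful benchmark is the proof in \cite{biler0,biler}. Measured against that, your sketch follows essentially the same route as the original: the self-similar ansatz with the scaling balance $-d\beta-1=-d\beta m-\beta\alpha$ giving $\beta=1/(d(m-1)+\alpha)$, and, as the analytic core, the Getoor--Dyda-type observation that the pairing of the profile power $\alpha/2$ for $u^{m-1}$ with the operator of order $\alpha-1$ truncates the hypergeometric series, so that the velocity $\nabla^{\alpha-1}\bigl[(1-\|y\|^2)_+^{\alpha/2}\bigr]$ is linear in $y$ on the unit ball. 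That is precisely the key lemma of \cite{biler0,biler}; in strategy your proposal is the right one.

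Three points, however, need repair before this is a proof. First, your formulation of the key identity through the Riesz potential $I_{2-\alpha}$ is ill-posed exactly where the theorem still applies: $I_{2-\alpha}$ exists as a convolution operator only for orders in $(0,d)$, so for $d=1$ and $\alpha\in(0,1]$ the quantity $I_{2-\alpha}\bigl[(1-\|y\|^2)_+^{\alpha/2}\bigr]$ does not make sense as written; one must work directly with the multiplier definition $\nabla^{\alpha-1}=\cor F^{-1}(i\xi\|\xi\|^{\alpha-2}\cor F\,\cdot)$ and prove the linear-velocity identity for that operator, as \cite{biler} do. Second, the constants: since the right-hand side of \eqref{eq:nonloceq} is homogeneous of degree $m$ (not $1$) in $u$, the pointwise matching on $\{\|x\|<\mathrm c t^\beta\}$ yields only \emph{one} relation linking the amplitude $C$ and the radius parameter $k$; it cannot ``force the displayed value of $k$'' by itself. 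The displayed $k$ and $C$ are determined jointly by that relation and the unit-mass normalization, so your ordering (equation fixes $k$, then mass fixes $C$) should be corrected, though this is a matter of bookkeeping. Third, your final step is asserted rather than proved, and in two respects. For the weak formulation, H\"older continuity of $u$ is not the operative ingredient: what the two-domain integration by parts against $\varphi$ requires is that the flux $u\,\nabla^{\alpha-1}(u^{m-1})$ be continuous up to the sphere $\|x\|=\mathrm c t^\beta$ and vanish there (which holds because the velocity is bounded near the interface --- the potential gains $2-\alpha$ derivatives over the $C^{\alpha/2}$ profile --- while $u\to0$), together with the integrability conditions of Definition \ref{def}; none of this is checked in your sketch. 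And the H\"older claim needs an actual computation: the profile vanishes like $\mathrm{dist}^{\alpha/(2(m-1))}$ at the free boundary, so ``accordingly'' does not deliver the exponent $\min\{\alpha/(m-1),1\}$ stated in the theorem; reconciling the naive boundary rate with the stated exponent is exactly the kind of point for which you should reproduce (or cite) the argument of \cite{biler} rather than wave at it.
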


It is worth to mention that the family of functions \eqref{eq:weaksol}
represents a class of nonnegative compactly supported solutions of
\eqref{eq:nonloceq}. Moreover, \eqref{eq:weaksol} is a self-similar
solution under a suitable space-time rescaling; i.e.
\[
u(x,t)=L^{d\beta}u\bigl(L^\beta x,Lt\bigr), \quad L>0.
\]
It is crucial to observe that the constant $C$ appearing in \eqref
{eq:weaksol} guarantees the mass conservation
\[
\int_{\bb R^d}u(x,t)\de x=\int_{\bb R^d}u_0(x)
\de x=1,
\]
or equivalently
\[
\frac{\de}{\de t}\int_{\bb R^d}u(x,t)\de x=\frac{\de}{\de t}
\int_{\bb
R^d}u_0(x)\de x=0,
\]
and then $u(x,t)$ (as well as $u_0(x)$) is a probability density
function with compact support $\overline{\bb B}_{\mathrm{c} t^\beta}:=\{
x\in\rr^d: ||x||\leq\mathrm{c} t^\beta\}$. By setting $R^2=\frac
{1}{k^{2/\alpha}}$, the solution \eqref{eq:weaksol} coincides with
(2.4) in \cite{biler}.\goodbreak

We point out that NPME has the property of finite speed of propagation.
We are able to explain this property as follows. The solution to NPME
is a continuous function $u(x,t)$ such that for any $t > 0$ the profile
$u( \cdot, t )$ is nonnegative, bounded and compactly supported. Hence,
the support expands eventually to penetrate the whole space, but it is
bounded at any fixed time. Therefore, for fixed $t>0$, the support of
\eqref{eq:weaksol} is given by the closed ball $\overline{\bb B}_{\mathrm{c} t^\beta}$,
while the free boundary (that is the set separating the region where
the solution is positive) is given by the sphere $\mathbb S^{d-1}_{\mathrm{c} t^\beta
}:=\{x\in\mathbb R^d:||x||= \mathrm{c} t^\beta\}$. 

The finite speed of propagation of NPME is in contrast with the
infinite speed of propagation of the classical heat equation; that is, a
nonnegative solution of the heat equation is positive everywhere in
$\mathbb R^d$.

\begin{remark}\label{solpme}
For $\alpha=2$, the solution \eqref{eq:weaksol} becomes the
Barenblatt--Kompanets--\break Zel'dovich--Pattle solution to the porous medium
equation \eqref{eq:mpme} supplemented with the initial condition
$u(x,0)=\delta(x)$ (see, for instance, \cite{vazquez}).
\end{remark}

\begin{remark}\label{inv}
From Theorem \ref{teo:sppme} it follows that $(u(x,t),t\geq0)$ is a class
of rotationally invariant functions; that is, let $O(d)$ be the group of
$d\times d$ orthogonal matrices acting in $\mathbb R^d$, then we have that
$u(M^Tx,t)=u(x,t)=u(||x||,t)$,
where $M\in O(d)$.
\end{remark}

The next proposition contains the explicit Fourier transform of \eqref
{eq:weaksol}. A similar result has been already proved, for instance,
in \cite{biler}, Lemma 4.1.

\begin{proposition}\label{propcf}
The Fourier transform of the probability density function $u(x,t)$
given by \eqref{eq:weaksol} is equal to
\begin{align}
\label{eq:cfbsol} \hat u(\xi,t)&:= \cor Fu(\xi,t)\notag
\\
&= \frac{1}{(2\pi)^{\frac d2}}\biggl(\frac{2k^{1/\alpha}}{t^\beta||\xi||} \biggr)^{\frac{d}{2}+\frac
{\alpha}{2(m-1)}}\varGamma \biggl(
\frac{d}{2}+\frac{\alpha}{2(m-1)}+1 \biggr)J_{\frac{d}{2}+\frac{\alpha}{2(m-1)}} \biggl(
\frac{||\xi||t^{\beta
}}{k^{1/\alpha}} \biggr),
\end{align}
where $\xi\in\mathbb R^d,d\geq1,$ and $J_\mu(x)=\sum_{k=0}^\infty(-1)^k\frac
{(x/2)^{2k+\mu}}{k!\varGamma(k+\mu+1)}$, with $\mu\in\mathbb R$, is the
Bessel function.
\end{proposition}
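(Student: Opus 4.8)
The plan is to use the rotational invariance of $u(\cdot,t)$ (Remark~\ref{inv}) to collapse the $d$-dimensional integral defining $\mathcal F u$ into a one-dimensional Hankel-type integral, and then to evaluate that integral by a Sonine formula. Because $u(\cdot,t)$ is radial, so is its transform, and integrating the plane wave over the sphere via the identity $\int_{\mathbb{S}^{d-1}}e^{i z\,\omega\cdot e}\,\de\sigma(\omega)=(2\pi)^{d/2}z^{1-d/2}J_{d/2-1}(z)$ (valid for every unit vector $e$) yields, with $\rho:=\|\xi\|$,
\[
\hat u(\xi,t)=\rho^{1-d/2}\int_0^\infty g(r)\,J_{d/2-1}(r\rho)\,r^{d/2}\,\de r,\qquad g(r):=Ct^{-d\beta}\Bigl(1-k^{2/\alpha}\frac{r^2}{t^{2\beta}}\Bigr)_+^{\nu},
\]
where $\nu:=\frac{\alpha}{2(m-1)}$ is the profile exponent of \eqref{eq:weaksol}; the factor $(2\pi)^{d/2}$ produced by the spherical average cancels the normalisation $1/(2\pi)^{d/2}$ built into $\mathcal F$. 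I would verify this reduction in the scalar case $d=1$, where $J_{-1/2}$ reproduces the cosine transform, to pin down the conventions.

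Since $g$ vanishes outside $[0,a]$ with $a:=\mathrm{c}\,t^\beta=t^\beta k^{-1/\alpha}$, the substitution $r=as$ sends $k^{2/\alpha}r^2/t^{2\beta}$ to $s^2$ and gives
\[
\hat u(\xi,t)=Ct^{-d\beta}\,\rho^{1-d/2}\,a^{d/2+1}\int_0^1(1-s^2)^{\nu}J_{d/2-1}(a\rho\,s)\,s^{d/2}\,\de s.
\]
The core of the argument is the Sonine integral
\[
\int_0^1(1-s^2)^{\nu}J_{\mu}(z s)\,s^{\mu+1}\,\de s=\frac{2^{\nu}\varGamma(\nu+1)}{z^{\nu+1}}\,J_{\mu+\nu+1}(z),\qquad \Re\mu>-1,\ \Re\nu>-1,
\]
applied with $\mu=d/2-1$ and $z=a\rho$. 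Both constraints are met because $d\geq1$ and $\nu>0$, and the resulting index $\mu+\nu+1=\frac d2+\nu$ together with the argument $a\rho=\|\xi\|t^\beta/k^{1/\alpha}$ match those in \eqref{eq:cfbsol} exactly. Collecting the powers of $a$ and $\rho$ leaves $\hat u(\xi,t)=Ct^{-d\beta}2^{\nu}\varGamma(\nu+1)\,a^{d/2-\nu}\rho^{-(d/2+\nu)}J_{d/2+\nu}(a\rho)$.

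The last step, and the one requiring care, is the reconciliation of the constants: I substitute $a=t^\beta k^{-1/\alpha}$ together with the explicit value $C=\varGamma(\frac d2+\nu+1)k^{d/\alpha}/\bigl(\pi^{d/2}\varGamma(\nu+1)\bigr)$ from Theorem~\ref{teo:sppme}. The factor $\varGamma(\nu+1)$ cancels, the $t$-powers combine to $t^{-\beta(d/2+\nu)}$, the $k$-powers to $k^{(d/2+\nu)/\alpha}$, and writing $\pi^{-d/2}=2^{d/2}(2\pi)^{-d/2}$ lets the surviving factors regroup as $2^{d/2+\nu}k^{(d/2+\nu)/\alpha}=(2k^{1/\alpha})^{d/2+\nu}$. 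Hence everything assembles into $\frac{1}{(2\pi)^{d/2}}\bigl(2k^{1/\alpha}/(t^\beta\rho)\bigr)^{d/2+\nu}\varGamma(\frac d2+\nu+1)J_{d/2+\nu}(a\rho)$, which is precisely \eqref{eq:cfbsol}. I expect the Sonine evaluation to be routine once located in the literature, so the genuine obstacle is this constant-matching, where the specific form of $C$ dictated by the mass normalisation is exactly what makes the prefactor collapse correctly.
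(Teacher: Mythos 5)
Your proposal is correct and follows essentially the same route as the paper: reduce by radial symmetry via the spherical integral $\int_{\mathbb S_1^{d-1}}e^{i\rho\xi\cdot\theta}\,\de\sigma(\theta)=(2\pi)^{d/2}(\rho||\xi||)^{1-d/2}J_{d/2-1}(\rho||\xi||)$, rescale to $[0,1]$, and apply the Sonine-type integral, which is exactly formula 6.567(1) of \cite{gr} used in the paper with $\nu=d/2-1$, $\mu=\frac{\alpha}{2(m-1)}$, $b=||\xi||t^\beta/k^{1/\alpha}$. The only differences are cosmetic: you carry out the constant-matching (which indeed collapses via $\pi^{-d/2}=2^{d/2}(2\pi)^{-d/2}$, and which the paper leaves implicit), and the value of $C$ you invoke comes from the theorem in Section~\ref{sec:weak} stating \eqref{eq:weaksol}, not from Proposition~\ref{teo:sppme}.
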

\begin{proof}
We prove the theorem for $d\geq 2.$ The case $d=1$ follows by simple calculations.
Let $\sigma$ be the measure on $\mathbb S_{1}^{d-1}$. We recall
that (see (2.12), p.~690, \cite{dgo}),
\begin{equation}
\label{eq:int} \int_{\mathbb S_1^{d-1}}e^{i\rho\xi\cdot\theta} \de\sigma({\bf
\theta})=(2\pi)^{d/2}\frac{J_{\frac{d}2-1}(\rho||\xi||)}{(\rho||\xi
||)^{\frac{d}2-1}}
\end{equation}
One has that
\begin{align*}
\hat u(\xi,t)&=\frac{1}{(2\pi)^{\frac d2}}\int_{\mathbb R^d}e^{i\xi\cdot x}u(x,t)\de x
\\
&=(\text{by Remark \ref{inv}})
\\
&=\frac{1}{(2\pi)^{\frac d2}}\int_0^{\frac{t^{\beta}}{k^{1/\alpha}}}\rho^{d-1}
Ct^{-d\beta} \biggl(1-\frac{k^{2/\alpha}\rho^2}{t^{2\beta}} \biggr)^{\frac{\alpha
}{2(m-1)}}\de\rho\int
_{\mathbb S_1^{d-1}}e^{i\rho\xi\cdot\theta} \de\sigma({\bf\theta})
\\
&=(\text{by}\, \eqref{eq:int})
\\
&=\int_0^{\frac{t^{\beta}}{k^{1/\alpha}}}
\rho^{d-1} Ct^{-d\beta} \biggl(1-\frac{k^{2/\alpha}\rho^2}{t^{2\beta}}
\biggr)^{\frac
{\alpha}{2(m-1)}}\frac{J_{\frac{d}2-1}(\rho||\xi||)}{(\rho||\xi
||)^{\frac{d}2-1}}\de\rho
\\
&=\frac{ Ct^{\beta(1-\frac{d}2)}}{(k^{1/\alpha})^{\frac{d}2+1}||\xi||^{\frac{d}2-1}}\int_0^1
\bigl(1-w^2\bigr)^{\frac{\alpha}{2(m-1)}} w^{d/2}J_{d/2-1}
\biggl(\frac{||\xi||t^{\beta}}{k^{1/\alpha}}w \biggr)\de w.
\end{align*}
In view of formula 6.567(1) on p.~688 of \cite{gr},
\begin{equation}
\label{eq:gr} \int_0^1x^{\nu+1}
\bigl(1-x^2\bigr)^\mu J_\nu(bx)\de
x=2^\mu\varGamma(\mu+1)b^{-(\mu
+1)}J_{\nu+\mu+1}(b)
\end{equation}
where $b>0$, Re$\nu>-1$, Re$\mu>-1$, we obtain \eqref{eq:cfbsol}.
\end{proof}


\section{Isotropic transport processes related to NPME}\label{sec:rf}
In this section, we analyze the link between the weak solution of the
nonlocal equation \eqref{eq:nonloceq} and the transport processes. We
follow the 
approach developed in \cite{dgo2}. Let us start with
introducing isotropic transport processes and recalling their main features.


An isotropic transport process, also called random flight, is a
continuous-time random walk in $\mathbb R^d$ described by a particle
starting at the origin with a randomly chosen direction and with finite
speed $c>0$. The direction of the particle changes whenever a collision
with some scattered obstacles in the environment happens and then a new
direction of motion is taken. For $d\geq2$, all the
directions are
independent and identically distributed. The directions are chosen
uniformly on the sphere $\mathbb S_1^{d-1}=\{x\in\mathbb R^d:||x||=1\}$.
For $d=1$, we have two possible directions alternatively taken by the
moving particle. The random flights have been studied, for instance, in
\cite{stadje,stadje2,dgos,lecaer,lecaer2,dgo,DG12,ghosh,lp}.
Recently, in \cite{garra,garra2} the relationship between the isotropic
transport processes and some fractional Klein--Gordon equations has been
analyzed. Furthermore, stochastic models like 
random flights are
associated to the Euler--Poisson--Darboux partial differential equations
as argued in \cite{garra3}.

Rigorously speaking, we introduce the isotropic transport processes as follows.
Let $(T_k,k\in\bb N_0)$ be a sequence of random arrival epochs with
$T_0:=0$. Furthermore, let $(V_k,k\in\bb N_0)$ be a sequence of random
variables defined for $d=1$, by $V_k:=V(0)(-1)^k$, where V(0) is a
uniform r.v. on $\{-1,+1\}$, while for $d\geq2$ they are independent
$(\bb S_1^{d-1},\mathcal B(\bb S_1^{d-1}))$-valued random variables
where $\mathcal B(\bb S_1^{d-1})$ denotes the Borel class on $\bb
S_1^{d-1} $.
We assume that during the interval $[0,t]$ the particle
takes a new direction, $V_0,V_1,\ldots,V_n$, $n+1$ times at random moments
$T_0,T_1,\ldots,T_n$, respectively. Therefore, we can define an isotropic
random flight on $(\varOmega, (\cor F_t^n,t\geq0))$ as follows
\[
X^n:=\bigl(X^n(t)=\bigl(X_1^n(t),
\ldots,X_d^n(t)\bigr), t\geq0\bigr),\quad
X^n(0)=0,\quad n\in\mathbb N,
\]
where $X^n(t)$ stands for the position, at time $t\geq0$, reached by
the moving particle according to the mechanism described above and
$(V^n(t),t\geq0)$ is the jump process
\[
V^n(t):=V_k, \quad T_k\leq
t<T_{k+1},
\]
with $0\leq k\leq n$; i.e.
\begin{equation}
\label{eq:definitionaddfun} X^n(t):=c\int_0^t
V^n(s)\de s=c\sum_{k=0}^{ n-1}V_k(T_{k+1}-T_{k})+c
(t-T_{ n})V_{ n},\quad n\in\mathbb N.
\end{equation}
$X^n$ is adapted to the filtration $ (\cor F_t^n,t\geq0)$ where
\[
\cor F_t^n:=\sigma\bigl((T_k\leq t)
\cap(V_0,V_1,\ldots,V_k)\in B, \forall B\in
\mathfrak B^{\otimes k+1},0\leq k\leq n\bigr),
\]
where $\mathfrak B:=\{-1,+1\}$, if $d=1$, or $\mathfrak B:=\mathcal
B(\bb S_1^{d-1})$, if $d>1$. Therefore $X^n(t)$ represents a random
motion with \xch{finite}{finte} velocity $c$ and $X^n(t)\in\bb B_{ct}$ a.s. for a
fixed $t>0$.
The components of $X^n(t)$ can be written explicitly as in formula
(1.6) of \cite{dgo}.
Important assumptions in our paper are:
the random vector of the renewal times $(\tau_1,\ldots,\tau_n)$, where $\tau
_{k+1}:=T_{k+1}-T_k$, has the joint density equal to
\begin{equation}
\label{eq:jointdis1} f_1(\tau_1,\ldots,\tau_n)=
\frac{n!}{t^n}\,1_{S_n}(\tau_1,\ldots,
\tau_n),\quad \text{for}\ d=1,
\end{equation}
\noindent
and
\begin{equation}
\label{eq:jointdis2} f_2(\tau_1,\ldots,\tau_n)=
\frac{\varGamma((n+1)(d-1))}{(\varGamma
(d-1))^{n+1}}\frac{1}{t^{(n+1)(d-1)-1}} \Biggl(\prod_{j=1}^{n+1}
\tau _j^{d-2} \Biggr)\,1_{S_n}(\tau_1,
\ldots,\tau_n),
\end{equation}
for $d\geq2$,
or
\begin{equation}
\label{eq:jointdis2bis} f_3(\tau_1,\ldots,\tau_n)=
\frac{\varGamma((n+1)(\frac{d}2-1))}{(\varGamma(\frac{d}2-1))^{n+1}}\frac{1}{t^{(n+1)(\frac{d}2-1)-1}} \Biggl(\prod_{j=1}^{n+1}
\tau_j^{\frac{d}2-2} \Biggr)\,1_{S_n}(\tau_1,
\ldots,\tau_n),
\end{equation}
for $d\geq3$,
where
\begin{align*}
S_n:= \Biggl\{(&\tau_1,\ldots,\tau_n)\in\bb
R^d:0<\tau_j<t-\sum_{k=0}^{j-1}
\tau _k,\\
& 1\leq j\leq n, \tau_0=0, \tau_{n+1}=t-
\sum_{j=1}^n\tau_j \Biggr\}.
\end{align*}
The distributions \eqref{eq:jointdis2} and \eqref{eq:jointdis2bis} are
rescaled Dirichlet distributions, with parameters $(d-1,\ldots,d-1),\,
d\geq2$, and $(\frac{d}2-1,\ldots,\frac{d}2-1),\, d\geq3$, respectively.
Generalized versions of the Dirichlet density functions \eqref
{eq:jointdis1} and \eqref{eq:jointdis2} have been used in \cite{DG14}
to generalize the family of random walks defined above.

In the one-dimensional case the process \eqref{eq:definitionaddfun} is
the well-known telegraph process and admits the density given by (see \cite{dgos})
\begin{equation}
\label{eq:disttp} \frac{P(X_1^n(t)\in\de x_1)}{\de x_1}= %
\begin{cases}
\frac{\varGamma(n+1)}{(\varGamma(\frac{n+1}{2}))^22^{n}ct} (1-\frac
{x_1^2}{c^2t^2} )_+^{\frac{n-1}{2}},& n\, \text{odd},\\
\frac{\varGamma(n+1)}{\varGamma(\frac{n}2+1)\varGamma(\frac{n}2)2^{n}ct}
(1-\frac{x_1^2}{c^2t^2} )_+^{\frac{n}2-1},& n\, \text{even}.
\end{cases} %
\end{equation}
We observe that for $n$ odd, we have that
\[
P\bigl(X_1^n(t)\in\de x_1\bigr)=P
\bigl(X_1^{n+1}(t)\in\de x_1\bigr).
\]
Under the assumptions \eqref{eq:jointdis2} and \eqref{eq:jointdis2bis},
\cite{dgo} 
provides (Theorem 2 in \cite{dgo}) the explicit density
functions of the random flights $X^n(t)$; that is,
\begin{equation}
\label{eq:distrf} \frac{P(X^n(t)\in\de x)}{\de x}= %
\begin{cases}
\frac{\varGamma(\frac{n+1}{2}(d-1)+\frac{1}2)}{\varGamma(\frac{n}{2}(d-1))\pi
^{\frac{d}2}(ct)^d} (1-\frac{||x||^2}{c^2t^2} )_+^{\frac{n}2(d-1)-1},&\text{if \eqref{eq:jointdis2} holds} , \\
\frac{\varGamma((n+1)(\frac{d}2-1)+1)}{\varGamma(n(\frac{d}2-1))\pi^{\frac{d}2}(ct)^d} (1-\frac{||x||^2}{c^2t^2} )_+^{n(\frac{d}2-1)-1},&\text{if \eqref{eq:jointdis2bis} holds}.
\end{cases} %
\end{equation}

\begin{remark}
It is easy to check that the sequence of random flights $X^n,n\in
\mathbb N$, admits the following scaling property
\[
P\bigl(aX^n( t/a)\in\de x\bigr)=P\bigl(X^n(t)\in\de x
\bigr),\quad a>0.
\]
\end{remark}

Hereafter, we discuss the main results of the paper; i.e. Propositions
\ref{teo:sppme}, \ref{teo:radial} below. Therefore, we provide a
reasonable probabilistic interpretation of the weak solution \eqref
{eq:weaksol} in terms of a time-rescaled random flights. From the
features of $X^n$ it emerges that the random flights share with \eqref
{eq:weaksol} the crucial property of finite speed of propagation in the
space. For this reason the transport process \eqref
{eq:definitionaddfun} seems to represent a fine choice to model
phenomena described by nonlinear diffusion equation with nonlocal
pressure \eqref{eq:nonloceq}. Our first result is the following theorem
and it represents a generalization of Theorem 1 in \cite{dgo2}.

\begin{proposition}\label{teo:sppme}
Let $Y^n:=(Y^n(t),t\geq0), n\in\bb N$, be the sequence of random
flights $Y^n(t):= X^n(t^\beta)$, with speed $\mathrm{c}:=\mathrm
c(\alpha,d):=1/k^{\frac{1}\alpha}$. $Y^n$ is adapted to the filtration
$(\cor G_t^n,t\geq0)$, where $\cor G_t^n:=\cor F_{t^\beta}^n$,
progressively measurable and
\[
P\bigl(Y^n(t)\in\de x\bigr)=u(x,t) \de x, \quad t>0,
\]
where $u(x,t)$ is \xch{the}{the the} weak solution \eqref{eq:weaksol} to the
equation \eqref{eq:nonloceq}.
The relationships between the number $n$ of changes of velocity of
$Y^n$ and the parameters $m>1$ and $\alpha\in(0,2]$ of NPME, are given by:
\begin{itemize}
\item[(i)] for $d=1$,
\[
m= %
\begin{cases}
\frac{\alpha}{n-1}+1=\frac{\alpha}{2k}+1,& n=2k+1,\\
\frac{\alpha}{n-2}+1=\frac{\alpha}{2k}+1,& n=2k+2,
\end{cases} %
\quad k\geq1;
\]
\item[(ii)] for $d\geq2$, \eqref{eq:jointdis2} holds and $ m=\frac
{\alpha}{n(d-1)-2}+1$ with $d>\frac{2}n+1$;
\item[(iii)] for $d\geq3$, \eqref{eq:jointdis2bis} holds and $m=\frac
{\alpha}{n(d-2)-2}+1$ with $d>\frac{2}n+2$.
\end{itemize}
\end{proposition}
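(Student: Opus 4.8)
The plan is to compute the law of the time-changed flight $Y^n(t)=X^n(t^\beta)$ directly from the already-known densities of $X^n$ and then match it term by term against the Barenblatt profile \eqref{eq:weaksol}. Since $\beta=\frac{1}{d(m-1)+\alpha}>0$, the map $t\mapsto t^\beta$ is a continuous increasing bijection of $[0,\infty)$ onto itself, so this deterministic time change preserves adaptedness and progressive measurability; hence $Y^n$ is adapted to $\cor G_t^n=\cor F_{t^\beta}^n$ and progressively measurable, which disposes of the structural part of the statement immediately.

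For the distributional identity I would substitute $t\mapsto t^\beta$ together with the speed $\mathrm{c}=k^{-1/\alpha}$ into the explicit densities, namely \eqref{eq:disttp} when $d=1$ and \eqref{eq:distrf} when $d\geq2$. The key observation is that the quadratic argument matches with this choice of speed: since $\mathrm{c}^2=k^{-2/\alpha}$ one has $1-\frac{||x||^2}{\mathrm{c}^2 t^{2\beta}}=1-k^{2/\alpha}\frac{||x||^2}{t^{2\beta}}$, which is exactly the bracket in \eqref{eq:weaksol}, and likewise the prefactor carries $t^{-d\beta}$ on both sides. The only remaining freedom is the H\"older exponent of the bracket, and equating it with $\frac{\alpha}{2(m-1)}$ is what forces the relations (i)--(iii). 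For instance, under \eqref{eq:jointdis2} the exponent is $\frac n2(d-1)-1$, and $\frac n2(d-1)-1=\frac{\alpha}{2(m-1)}$ rearranges to $m=\frac{\alpha}{n(d-1)-2}+1$; the case \eqref{eq:jointdis2bis} is identical with $d-1$ replaced by $\frac d2-1$, while for $d=1$ one reads off the odd/even exponents $\frac{n-1}2$ and $\frac n2-1$ from \eqref{eq:disttp}, giving the two branches of (i).

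It then remains to check that the normalizing constants agree, and this is the only point needing genuine computation. For $d\geq2$ the two prefactors coincide on the nose once the exponent relation is inserted: using $\frac{\alpha}{2(m-1)}+1=\frac{n(d-1)}2$ one gets $\frac d2+\frac{\alpha}{2(m-1)}+1=\frac{(n+1)d-n}2=\frac{n+1}2(d-1)+\frac12$, so the $\varGamma$-arguments in $C$ of \eqref{eq:weaksol} match those of \eqref{eq:distrf} term by term, and $k^{d/\alpha}=\mathrm{c}^{-d}$ reconciles the remaining powers. I expect the genuinely delicate case to be $d=1$, where the telegraph prefactor in \eqref{eq:disttp} carries a squared Gamma factor $(\varGamma(\frac{n+1}2))^2$; there the required identity $\frac{\varGamma(\frac n2+1)}{\sqrt{\pi}\,\varGamma(\frac{n+1}2)}=\frac{\varGamma(n+1)}{2^n(\varGamma(\frac{n+1}2))^2}$ is precisely the Legendre duplication formula applied at $z=\frac{n+1}2$.

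Finally I would record that the admissibility constraints are forced by $m>1$: positivity of the exponents' denominators, $n(d-1)-2>0$ and $n(\frac d2-1)-1>0$, is equivalent to $d>\frac2n+1$ and $d>\frac2n+2$ respectively, matching (ii) and (iii). The main obstacle is thus not conceptual but the careful bookkeeping of the shifted $\varGamma$-arguments, with the $d=1$ duplication-formula identity being the one nontrivial verification.
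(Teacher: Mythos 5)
Your proposal is correct and follows essentially the same route as the paper: rescale time via $t'=t^\beta$ with speed $\mathrm{c}=k^{-1/\alpha}$, match the H\"older exponent of the bracket against \eqref{eq:disttp} and \eqref{eq:distrf} to force the relations (i)--(iii), invoke the Legendre duplication formula for the $d=1$ constants, and derive the constraints $d>\frac{2}{n}+1$ and $d>\frac{2}{n}+2$ from $m>1$. Your explicit verification that the $\varGamma$-arguments in $C$ reduce to $\frac{n+1}{2}(d-1)+\frac{1}{2}$ and $\frac{n}{2}(d-1)$ is a detail the paper leaves implicit (it simply asserts the coincidence of \eqref{eq:fundsol2} with \eqref{eq:distrf}), but it is the same computation.
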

\begin{proof}

Let $n\in\mathbb N$.  We observe that path map
\[
t\mapsto Y^n(t,\omega)=\mathrm c\int_0^{t^\beta}
V^n(s,\omega)\de s,\quad\omega\in\varOmega,
\]
is continuous and then $Y^n$ is a continuous process. Therefore, $Y^n$
is progressively measurable if it is adapted to $(\cor G_t^n,t\geq0)$
(see, e.g., Proposition 1.13, \cite{karat}). Let $t^\beta>0$, $(s,\omega
)\mapsto V^n(s,\omega),\omega\in\varOmega,s\leq t^\beta$ is a $\mathcal
B([0,t^{\beta}])\otimes\mathcal G_t^n$-measurable function. Hence, by
Fubini's theorem one has that the map $\omega\mapsto\mathrm c \int_0^{t^\beta} V^n(s,\omega)\de s$ is $\mathcal G_t^n$-measurable and
then the process $Y^n$ is adapted to the filtration $(\cor G_t^n,t\geq0)$.

By rescaling the time coordinate as follows
\[
t':=t^\beta,
\]
the solution \eqref{eq:weaksol} to NPME becomes
\begin{equation}
\label{eq:fundsol2} u\bigl(x,t'\bigr)=\frac{\varGamma(\frac{d}2+\frac{\alpha}{2(m-1)}+1)}{\varGamma(\frac
{\alpha}{2(m-1)}+1)\pi^{\frac{d}2}}
\frac{1}{(\mathrm{c}t')^{d}} \biggl(1-\frac{||x||^2}{(\mathrm{c}t')^2} \biggr)_+^{\frac{\alpha}{2(m-1)}},
\end{equation}
where $\mathrm{c}:=\mathrm c(\alpha,d):=1/k^{\frac{1}\alpha}$.

Let us deal with a telegraph process defined by \eqref
{eq:definitionaddfun} with time scale $t'$ and speed $\mathrm{c}$. By
exploiting the duplication formula for the Gamma function we can write
the solution \eqref{eq:fundsol2} for $d=1$ as follows
\begin{equation}
\label{eq:epd1} u\bigl(x_1,t'\bigr)=\frac{\varGamma(2+\frac{\alpha}{m-1})2^{1-2(\frac{\alpha
}{2(m-1)}+1)}}{(\varGamma(\frac{\alpha}{2(m-1)}+1))^2}
\frac{1}{\mathrm
{c}t'} \biggl(1-\frac{x_1^2}{(\mathrm{c}t')^2} \biggr)_+^{\frac{\alpha}{2(m-1)}}.
\end{equation}
For
\begin{equation*}
\frac{\alpha}{2(m-1)}=\frac{n-1}{2}, \quad\text{that is}\quad m=
\frac{\alpha}{n-1}+1,
\end{equation*}
the solution \eqref{eq:epd1} coincides with the first part of \eqref
{eq:disttp}, while for
\begin{equation*}
\frac{\alpha}{2(m-1)}=\frac{n}{2}-1, \quad\text{that is}\quad m=
\frac{\alpha}{n-2}+1,
\end{equation*}
the solution \eqref{eq:epd1} coincides with the second part of \eqref{eq:disttp}.
For $n>2$, in both cases $m\in(1,\infty)$. Therefore, we can conclude that
\begin{equation*}
P\bigl(X_1^n\bigl(t'\bigr)\in\de
x_1\bigr)= P\bigl(X_1^{n+1}\bigl(t'
\bigr)\in\de x_1\bigr)=u\bigl(x_1,t'\bigr)
\de x_1.
\end{equation*}

Now,
let us consider a random flight defined in $\mathbb R^d, d\geq2$, by
\eqref{eq:definitionaddfun} with time scale $t'$ and speed $\mathrm c$
defined above. Under the assumption \eqref{eq:jointdis2}, for
\begin{equation*}
\frac{\alpha}{2(m-1)}=\frac{n}2(d-1)-1, \quad\text{that is}\quad m=
\frac
{\alpha}{n(d-1)-2}+1,
\end{equation*}
the function \eqref{eq:fundsol2} coincides with the first part of \eqref
{eq:distrf}. Since $m\in(1,\infty)$, we infer that
\begin{equation}
\label{eq:ineq} d>\frac{2}n+1.
\end{equation}
For $d=2$ the inequality \eqref{eq:ineq} holds for $n\geq3$; for
$d=3$, it holds for $n\geq2$; for $d>3$, \eqref{eq:ineq} holds for all
$n\geq1$.
Therefore, under the condition \eqref{eq:ineq}
\begin{equation*}
P\bigl(X^n\bigl(t'\bigr)\in\de x\bigr)= u
\bigl(x,t'\bigr) \de x.
\end{equation*}
Analogously, under the assumption \eqref{eq:jointdis2bis}, for
\begin{equation*}
\frac{\alpha}{2(m-1)}= n \biggl(\frac{d}2-1 \biggr)-1, \quad\text{that is}
\quad m=\frac{\alpha}{n(d-2)-2}+1,
\end{equation*}
the function \eqref{eq:fundsol2} coincides with the second part of \eqref
{eq:distrf}. Since $m\in(1,\infty)$, we infer that
\begin{equation}
\label{eq:ineq2} d>\frac{2}n+2.
\end{equation}
For $d=3$ the inequality \eqref{eq:ineq} holds for $n\geq3$; for
$d=4$, it holds for $n\geq2$; for $d>4$, \eqref{eq:ineq} holds for all
$n\geq1$.
Therefore, under the condition \eqref{eq:ineq2}
\begin{equation*}
P\bigl(X^n\bigl(t'\bigr)\in\de x\bigr)= u
\bigl(x,t'\bigr) \de x.\qedhere
\end{equation*}
\end{proof}

To enhance the features of the random models $Y^n, n\geq 1$, it is
useful to introduce the Euclidean distance process $\mathsf
{R}^n:=(\mathsf{R}^n(t),t\geq0)$; that is $\mathsf
R^n(t):=||Y^n(t)||$. For a fixed $t\geq0$, $\mathsf{R}^n(t)\in
[0,\mathrm c t^\beta]$ a.s. The next result will be useful for arguing
on the anomalous diffusivity of $Y^n$.
\begin{proposition}\label{teo:radial}
Under the conditions (i), (ii) and (iii) of Proposition \ref
{teo:sppme}, the following results hold:

1) the probability density function of\/ $\mathsf{R}^n$ becomes:
\begin{align}
\label{eq:distpdf} \frac{P(\mathsf{R}^n(t)\in\de r)}{\de r} &=\frac{2\varGamma(\frac{d}2+\frac{\alpha}{2(m-1)}+1)}{\varGamma(\frac{\alpha
}{2(m-1)}+1)\varGamma(\frac{d}2)}\frac{r^{d-1}}{ (\mathrm c t^\beta
 )^{d}}
\biggl(1-\frac{r^2}{\mathrm c^2t^{2\beta}} \biggr)_+^{\frac
{\alpha}{2(m-1)}},\quad t> 0;
\end{align}

2) let $p\geq1$ and $d\geq2$; then
\begin{align}
\label{eq:mom} E\bigl(\mathsf{R}^n(t)\bigr)^p=
\frac{\varGamma(\frac{d}2+\frac{\alpha
}{2(m-1)}+1)\varGamma(\frac{d+p}{2})}{\varGamma(\frac{\alpha}{2(m-1)}+1+\frac
{d+p}{2})\varGamma(\frac{d}2)} \bigl(\mathrm ct^\beta \bigr)^{p},
\end{align}
while for $d=1$
\begin{align}
\label{eq:mom1} E\bigl(\mathsf{R}^n(t)\bigr)= %
\begin{cases}
0,& p\,\text{odd},\\
\frac{\varGamma(\frac{1}2+\frac{\alpha}{2(m-1)}+1)\varGamma(\frac
{1+p}{2})}{\varGamma(\frac{\alpha}{2(m-1)}+1+\frac{p+1}{2})\sqrt\pi}
(\mathrm ct^\beta )^{p}, & p\,\text{even};
\end{cases}
\end{align}

3) the rescaled process $ (\frac{X^n(t^\beta)}{\mathrm ct^\beta
},t\geq0 )$ has the distribution
law independent from the time $t$ and
with compact support $\overline{\bb B}_1$; i.e.
\begin{equation*}
\label{eq:rescproc} w(x,t):=\frac{P (\frac{X^n(t^\beta)}{\mathrm ct^\beta}\in\de
x )}{\de x}=\frac{\varGamma(\frac{d}2+\frac{\alpha}{2(m-1)}+1)}{\pi
^{d/2}\varGamma(\frac{\alpha}{2(m-1)}+1)}\bigl(1-||x||^2
\bigr)_+^{\frac{\alpha}{2(m-1)}}.
\end{equation*}
Furthermore
\begin{align*}
\label{eq:rescproccf} \hat w(\xi,t)&:= \cor Fw(\xi,t)\\*
&=\frac{1}{(2\pi)^{\frac{d}{2}}} \biggl(\frac{2}{||\xi||}
\biggr)^{\frac
{d}{2}+\frac{\alpha}{2(m-1)}}\varGamma \biggl(\frac{d}{2}+\frac{\alpha
}{2(m-1)}+1
\biggr)J_{\frac{d}{2}+\frac{\alpha}{2(m-1)}} (||\xi || );
\end{align*}

4) the rescaled distance process $ (\frac{\mathsf{R}^n(t)}{\mathrm c
t^\beta},t\geq0 )$ admits probability density function given by a
Beta r.v. with parameters $\frac{d}2$ and $\frac{\alpha}{2(m-1)}+1$.
\end{proposition}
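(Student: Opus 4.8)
The entire proposition flows from the single input of Proposition~\ref{teo:sppme}, namely that $Y^n(t)$ has density $u(x,t)$, combined with the rotational invariance recorded in Remark~\ref{inv}. My plan is to reduce each of the four assertions to a one-dimensional integral of Beta type. For part~1) I would pass to spherical coordinates: since $u(x,t)=u(\|x\|,t)$, the law of $\mathsf{R}^n(t)=\|Y^n(t)\|$ is the radial profile $u(r,t)$ multiplied by the surface measure $\frac{2\pi^{d/2}}{\Gamma(d/2)}\,r^{d-1}$ of the sphere of radius $r$. Inserting the rescaled form \eqref{eq:fundsol2} with $t'=t^\beta$ and $\mathrm c=1/k^{1/\alpha}$, the factor $\pi^{d/2}$ cancels and one reads off \eqref{eq:distpdf} at once.

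For part~2) I would integrate $r^p$ against \eqref{eq:distpdf} and substitute $w=r/(\mathrm c t^\beta)$, turning the integral into $\int_0^1 w^{d+p-1}(1-w^2)^{\frac{\alpha}{2(m-1)}}\,\de w=\tfrac12 B\!\left(\tfrac{d+p}{2},\tfrac{\alpha}{2(m-1)}+1\right)$; collecting the Gamma factors produces \eqref{eq:mom}. The case $d=1$ needs a small separate remark, because there $\mathsf{R}^n(t)=|X_1^n(t^\beta)|$ and the relevant quantity is the moment of the signed position $X_1^n(t^\beta)$, whose density \eqref{eq:disttp} is even: hence odd moments vanish, while for even $p$ the same Beta integral with $d=1$ (using $\Gamma(1/2)=\sqrt\pi$) yields the second line of \eqref{eq:mom1}.

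Part~3) I would obtain by the scaling of densities: $Z:=Y^n(t)/(\mathrm c t^\beta)$ has density $w(x,t)=(\mathrm c t^\beta)^d\,u(\mathrm c t^\beta x,t)$, and upon inserting \eqref{eq:fundsol2} the powers of $\mathrm c t^\beta$ cancel, leaving the stated time-independent profile supported on $\overline{\bb B}_1$. For the Fourier transform I would use the scaling rule $\cor F w(\xi,t)=\hat u\!\left(\xi/(\mathrm c t^\beta),t\right)$, immediate from the definition of $\cor F$ after the substitution $y=\mathrm c t^\beta x$, and substitute into Proposition~\ref{propcf}; since $\mathrm c=1/k^{1/\alpha}$, the Bessel argument $\|\xi\|t^\beta/k^{1/\alpha}$ collapses to $\|\xi\|$ and the prefactor base to $2/\|\xi\|$, giving precisely $\hat w$. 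Finally, for part~4) I would change variables $U=\mathsf{R}^n(t)/(\mathrm c t^\beta)$ in \eqref{eq:distpdf}, obtaining the density $\frac{2}{B(d/2,\,b)}\,u^{d-1}(1-u^2)^{\,b-1}$ with $b=\frac{\alpha}{2(m-1)}+1$, and recognize it as the law of the square root of a Beta$(d/2,b)$ variable; equivalently $U^2$ is Beta$(d/2,b)$-distributed.

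None of the steps is genuinely hard once Proposition~\ref{teo:sppme} is available; the computations are routine Beta/Gamma bookkeeping built on rotational invariance. The two points demanding care are the $d=1$ moment formula \eqref{eq:mom1}, where one must pass from the distance to the signed position and invoke the evenness of \eqref{eq:disttp} to annihilate the odd moments, and the Beta identification in part~4), where the quadratic rescaling $1-u^2$ means that it is the squared rescaled distance, rather than the distance itself, that is literally Beta distributed.
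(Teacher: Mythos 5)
Your proposal is correct and follows essentially the same route as the paper's proof: spherical coordinates with rotational invariance for 1), the Beta integral after rescaling for 2), the Jacobian theorem applied to $g(x)=x/(\mathrm c t^\beta)$ for 3), and 4) as a direct consequence of 1), with your derivation of $\hat w$ via the scaling identity $\cor F w(\xi,t)=\hat u(\xi/(\mathrm c t^\beta),t)$ applied to Proposition \ref{propcf} being merely a cleaner packaging of the paper's ``same calculations leading to \eqref{eq:cfbsol}''. Your two flagged subtleties are also correct precisely where the paper is terse: for $d=1$ formula \eqref{eq:mom1} only makes sense for the moments of the signed position $X_1^n(t^\beta)$, whose even density \eqref{eq:disttp} annihilates the odd moments, and in 4) the density $\frac{2\varGamma(\frac d2+b)}{\varGamma(\frac d2)\varGamma(b)}u^{d-1}(1-u^2)^{b-1}$ with $b=\frac{\alpha}{2(m-1)}+1$ shows it is the \emph{square} of the rescaled distance that is literally Beta$\bigl(\frac d2,b\bigr)$-distributed.
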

\begin{proof}
1) By exploiting Remark \ref{inv} and $P(\mathsf{R}^n(t) \leq
r)=P(Y^n(t)\in\mathbb B_r)$, it is not hard to prove that
\begin{align*}
P\bigl(\mathsf{R}^n(t)\in\de r\bigr)&=\text{area} \bigl(\bb
S_1^{d-1}\bigr)r^{d-1}u(r,t) \de r,
\end{align*}
\querymark{Q1}where $\text{area} (\bb S_1^{d-1})=2\pi^{d/2}/\varGamma(d/2)$, and then
\eqref{eq:distpdf} immediately follows.

2) From point 1), we have
\begin{align*}
E\bigl(\mathsf{R}^n(t)\bigr)^p&=\int
_0^\infty\text{area}\bigl(\bb S_1^{d-1}
\bigr)r^{p+d-1}u(r,t) \de r\notag
\\
&=\frac{2\varGamma(\frac{d}2+\frac{\alpha}{2(m-1)}+1)}{\varGamma(\frac{\alpha
}{2(m-1)}+1)\varGamma(\frac{d}2)}\int_0^{\mathrm ct^\beta}
\frac
{r^{p+d-1}}{ (\mathrm ct^\beta )^{d}} \biggl(1-\frac
{r^2}{\mathrm c^2t^{2\beta}} \biggr)^{\frac{\alpha}{2(m-1)}} \de r\notag
\\
&=\frac{\varGamma(\frac{d}2+\frac{\alpha}{2(m-1)}+1)}{\varGamma(\frac{\alpha
}{2(m-1)}+1)\varGamma(\frac{d}2)} \bigl(\mathrm ct^\beta \bigr)^{p}\int
_0^{1}w^{\frac{d+p}{2}-1} (1-w )^{\frac{\alpha}{2(m-1)}} \de
w\notag
\\
&=\frac{\varGamma(\frac{d}2+\frac{\alpha}{2(m-1)}+1)\varGamma(\frac
{d+p}{2})}{\varGamma(\frac{\alpha}{2(m-1)}+1+\frac{d+p}{2})\varGamma(\frac{d}2)} \bigl(\mathrm ct^\beta \bigr)^{p}.
\end{align*}
For $d=1$ the result \eqref{eq:mom1} follows by similar calculations.

3) For fixed $t> 0$, the result \eqref{eq:rescproc} is derived from \eqref
{eq:fundsol2}, by applying the Jacobian theorem to the bijection $g:\rr
^d \to\rr^d$ with $g(x)=\frac{1}{\mathrm c t^\beta}x$. By the same
calculations leading to \eqref{eq:cfbsol}, we can prove that the
Fourier transform $\hat w(\xi,t)$ holds true.

4) It is an immediate consequence of the point 1).
\end{proof}

The Barenblatt--Kompanets--Zel'dovich--Pattle solution to the classical
PME does not spread in the space linearly over the time and then we can
argue that the phenomena described by the equation \eqref{eq:mpme}
represent anomalous diffusion (see, for instance, \cite{vazquez}).
Similar considerations hold for \eqref{eq:weaksol}. By means of Theorem
\ref{teo:radial}, we infer that the stochastic models $Y^n, n\geq 1,$ behave similarly to an anomalous diffusion.
From \eqref{eq:mom} and \eqref{eq:mom1}, we observe that
\[
\text{Var}\bigl(\mathsf{R}^n(t)\bigr)=O\bigl(t^{2\beta}\bigr),
\quad t>0.
\]
For $d=1$, one has $2\beta=\frac{2}{m-1+\alpha}=\frac{4k}{\alpha
(2k+1)}, k\geq1$ (condition (i) in Proposition \ref{teo:sppme}).
Therefore, for a fixed $k\in\mathbb N$, we can find the values $\alpha\in
(0,2]$, such that the process\querymark{Q2} $Y^n$ spreads over the real
line like a sub-diffusion or a super-diffusion. Therefore
$Y^n$ has the following properties:
\begin{itemize}
\item scatters in the space as a sub-diffusion; i.e. $2\beta<1$, if and
only if $\frac{4k}{(2k+1)}<\alpha$;
\item is a super-diffusion process; i.e. $2\beta>1$, if and only if
$\frac{4k}{(2k+1)}>\alpha$;
\item represents a classical diffusion if and only if $\frac{4k}{
(2k+1)}=\alpha$ (i.e. $2\beta=1$).
\end{itemize}
Analogous remarks hold in higher dimensions. For  a fixed $n\in\mathbb N$ and
$d\geq2$ (resp. $d\geq3$), under the condition (ii) (resp. (iii)) in
Proposition \ref{teo:sppme}, the random process $Y^n$ has the following properties:
\begin{itemize}
\item behaves similarly to a sub-diffusion if and only if $\frac
{2n(d-1)-4}{(n+1)(d-1)-1}<\alpha$ (resp. $\frac
{2n(d-2)-4}{(n+1)(d-2)-2}<\alpha$);
\item spreads over the space like a super-diffusion if and only if
$\frac{2n(d-1)-4}{(n+1)(d-1)-1}>\alpha$ (resp. $\frac
{2n(d-2)-4}{(n+1)(d-2)-1}>\alpha$);
\item represents a diffusion if and only if $\frac
{2n(d-1)-4}{(n+1)(d-1)-1}=\alpha$ (resp. $\frac
{2n(d-2)-4}{(n+1)(d-2)-1}=\alpha$).
\end{itemize}

\section{Conclusions}


We are able to provide a probabilistic interpretation of the weak
solution \eqref{eq:weaksol} to NPME. In particular, we deal with random
flight models \eqref{eq:definitionaddfun} with a suitable rescaling of
the time coordinate. These random processes enjoy the main features of
\eqref{eq:weaksol}, at least for particular values of $m$:
\begin{itemize}
\item finite speed of propagation property with compact support given
by a closed ball;
\item spread over the space like $t^{2\beta}$; i.e. anomalous
diffusivity depending on the values of the fractional parameter $\alpha$.
\end{itemize}

In conclusion, the isotropic transport processes seem to describe well
the real phenomena studied by means of the degenerate nonlinear
diffusion equation with fractional pressure \eqref{eq:nonloceq}.

\begin{acknowledgement}[title={Acknowledgments}]
The author wishes to thank both the referees for their comments and
remarks which led to improve the earlier version of the paper.
\end{acknowledgement}





\begin{thebibliography}{34}


\bibitem{ben}
\begin{barticle}
\bauthor{\bsnm{Benachour}, \binits{S.}},
\bauthor{\bsnm{Chassaing}, \binits{P.}},
\bauthor{\bsnm{Roynette}, \binits{B.}},
\bauthor{\bsnm{Vallois}, \binits{P.}}:
\batitle{Processus associ\'{e}s a l'\'{e}quation des milieux poreux}.
\bjtitle{Annali della Scuola Superiore di Pisa}
\bvolume{4},
\bfpage{793}--\blpage{832}
(\byear{1996}).
\bid{mr={1469575}}
\end{barticle}
%
\OrigBibText
Benachour, S., Chassaing, P., Roynette, B., Vallois, P.:
Processus associ\'{e}s a l'\'{e}quation des milieux poreux. Annali
della Scuola Superiore di Pisa {\bf4}, 793--832 (1996)
\endOrigBibText
\bptok{structpyb}
\endbibitem

\bibitem{bilerkarch}
\begin{barticle}
\bauthor{\bsnm{Biler}, \binits{P.}},
\bauthor{\bsnm{Karch}, \binits{G.}},
\bauthor{\bsnm{Monneau}, \binits{R.}}:
\batitle{Nonlinear diffusion of dislocation density and self-similar solutions}.
\bjtitle{Communications in Mathematical Physics}
\bvolume{294},
\bfpage{145}--\blpage{168}
(\byear{2010}).
\bid{doi={10.1007/s00220-009-0855-8}, mr={2575479}}
\end{barticle}
%
\OrigBibText
Biler, P., Karch, G., Monneau, R.: Nonlinear
diffusion of dislocation density and self-similar solutions.
Communications in Mathematical Physics {\bf294}, 145--168 (2010)
\endOrigBibText
\bptok{structpyb}
\endbibitem

\bibitem{biler0}
\begin{barticle}
\bauthor{\bsnm{Biler}, \binits{P.}},
\bauthor{\bsnm{Imbert}, \binits{C.}},
\bauthor{\bsnm{Karch}, \binits{G.}}:
\batitle{Barenblatt profiles for a non local porous medium equation}.
\bjtitle{C.R. Acad. Sci. Paris, Ser. I}
\bvolume{349},
\bfpage{641}--\blpage{645}
(\byear{2011}).
\bid{doi={\\10.1016/j.crma.2011.06.003}, mr={2817383}}
\end{barticle}
%
\OrigBibText
Biler, P., Imbert, C., Karch, G.: Barenblatt profiles
for a non local porous medium equation. C.R. Acad. Sci. Paris, Ser. I
{\bf349}, 641--645 (2011)
\endOrigBibText
\bptok{structpyb}
\endbibitem

\bibitem{biler}
\begin{barticle}
\bauthor{\bsnm{Biler}, \binits{P.}},
\bauthor{\bsnm{Imbert}, \binits{C.}},
\bauthor{\bsnm{Karch}, \binits{G.}}:
\batitle{The nonlocal porous medium equation: Barenblatt profiles and other weak solutions}.
\bjtitle{Archive for Rational Mechanics and Analysis}
\bvolume{215},
\bfpage{497}--\blpage{529}
(\byear{2015}).
\bid{doi={10.1007/s00205-014-0786-1}, mr={3294409}}
\end{barticle}
%
\OrigBibText
Biler, P., Imbert, C., Karch, G.: The nonlocal porous
medium equation: Barenblatt profiles and other weak solutions. Archive
for Rational Mechanics and Analysis {\bf215}, 497--529 (2015)
\endOrigBibText
\bptok{structpyb}
\endbibitem

\bibitem{caff}
\begin{barticle}
\bauthor{\bsnm{Caffarelli}, \binits{L.}},
\bauthor{\bsnm{Vazquez}, \binits{J.L.}}:
\batitle{Nonlinear porous medium flow with fractional potential pressure}.
\bjtitle{Archive for Rational Mechanics and Analysis}
\bvolume{202},
\bfpage{537}--\blpage{565}
(\byear{2011}).
\bid{doi={10.1007/s00205-011-0420-4}, mr={2847534}}
\end{barticle}
%
\OrigBibText
Caffarelli, L., Vazquez, J.L.: Nonlinear porous medium
flow with fractional potential pressure. Archive for Rational Mechanics
and Analysis {\bf202}, 537--565 (2011)
\endOrigBibText
\bptok{structpyb}
\endbibitem

\bibitem{caff2}
\begin{barticle}
\bauthor{\bsnm{Caffarelli}, \binits{L.}},
\bauthor{\bsnm{Vazquez}, \binits{J.L.}}:
\batitle{Asymptotic behaviour of a porous medium equation with fractional diffusion}.
\bjtitle{Discrete \& Continuous Dynamical Systems}
\bvolume{29},
\bfpage{1393}--\blpage{1404}
(\byear{2011}).
\bid{doi={10.3934/dcds.2011.29.1393}, mr={2773189}}
\end{barticle}
%
\OrigBibText
Caffarelli, L., Vazquez, J.L.: Asymptotic behaviour of
a porous medium equation with fractional diffusion. Discrete \&
Continuous Dynamical Systems {\bf29}, 1393--1404 (2011)
\endOrigBibText
\bptok{structpyb}
\endbibitem

\bibitem{DG12}
\begin{barticle}
\bauthor{\bparticle{De} \bsnm{Gregorio}, \binits{A.}}:
\batitle{On random flights with non-uniformly distributed directions}.
\bjtitle{Journal of Statistical Physics}
\bvolume{147},
\bfpage{382}--\blpage{411}
(\byear{2012}).
\bid{doi={10.1007/s10955-\\012-0471-4}, mr={2922773}}
\end{barticle}
%
\OrigBibText
De Gregorio, A.: On random flights with non-uniformly
distributed directions. Journal of Statistical Physics {\bf147},
382--411 (2012)
\endOrigBibText
\bptok{structpyb}
\endbibitem

\bibitem{DG14}
\begin{barticle}
\bauthor{\bparticle{De} \bsnm{Gregorio}, \binits{A.}}:
\batitle{A family of random walks with generalized Dirichlet steps}.
\bjtitle{Journal of Mathematical Physics}
\bvolume{55},
\bnumber{023302}
(\byear{2014}).
\bcomment{17 pp.}
\bid{doi={10.1063/\\1.4863475}, mr={3202881}}
\end{barticle}
%
\OrigBibText
De Gregorio, A.: A family of random walks with
generalized Dirichlet steps. Journal of Mathematical Physics {\bf55},
023302, 17 pp. (2014)
\endOrigBibText
\bptok{structpyb}
\endbibitem

\bibitem{dgos}
\begin{barticle}
\bauthor{\bparticle{De} \bsnm{Gregorio}, \binits{A.}},
\bauthor{\bsnm{Orsingher}, \binits{E.}},
\bauthor{\bsnm{Sakhno}, \binits{L.}}:
\batitle{Motions with finite velocity analyzed with order statistics and differential equations}.
\bjtitle{Theory of Probability and Mathematical Statistics}
\bvolume{71},
\bfpage{63}--\blpage{79}
(\byear{2005}).
\bid{doi={10.1090/S0094-9000-05-00648-4}, mr={2144321}}
\end{barticle}
%
\OrigBibText
De Gregorio, A., Orsingher, E., Sakhno, L.: Motions with
finite velocity analyzed with order statistics and differential
equations. Theory of Probability and Mathematical Statistics {\bf71},
63--79 (2005)
\endOrigBibText
\bptok{structpyb}
\endbibitem

\bibitem{dgo}
\begin{barticle}
\bauthor{\bparticle{De} \bsnm{Gregorio}, \binits{A.}},
\bauthor{\bsnm{Orsingher}, \binits{E.}}:
\batitle{Flying randomly in $\mathbb R^d$ with Dirichlet displacements}.
\bjtitle{Stochastic Processes and their Applications}
\bvolume{122},
\bfpage{676}--\blpage{713}
(\byear{2012}).
\bid{doi={10.1016/j.spa.2011.10.009}, mr={2868936}}
\end{barticle}
%
\OrigBibText
De Gregorio, A., Orsingher, E.: Flying randomly in
$\mathbb R^d$ with Dirichlet displacements. Stochastic Processes and
their Applications {\bf122}, 676--713 (2012)
\endOrigBibText
\bptok{structpyb}
\endbibitem

\bibitem{dgo2}
\begin{botherref}
\oauthor{\bparticle{De} \bsnm{Gregorio}, \binits{A.}},
\oauthor{\bsnm{Orsingher}, \binits{E.}}:
Random flights connecting Porous Medium and Euler-Poisson-Darboux equations
(2017).
20 pp. \arxivurl{arXiv:1709.07663}
\end{botherref}
%
\OrigBibText
De Gregorio, A., Orsingher, E.: Random flights
connecting Porous Medium and Euler-Poisson-Darboux equations.
arXiv:1709.07663, 20 pp. (2017)
\endOrigBibText
\bptok{structpyb}
\endbibitem

\bibitem{ekhaus}
\begin{barticle}
\bauthor{\bsnm{Ekhaus}, \binits{M.}},
\bauthor{\bsnm{Sepp\"{a}l\"{a}inen}, \binits{T.}}:
\batitle{Stochastic dynamics macroscopically governed by the porous medium equation for isothermal flow}.
\bjtitle{Annales Academi{\ae} Scientiarum Fennic\ae}
\bvolume{21},
\bfpage{309}--\blpage{352}
(\byear{1996}).
\bid{mr={1404089}}
\end{barticle}
%
\OrigBibText
Ekhaus, M., Sepp\"al\"ainen, T.: Stochastic dynamics
macroscopically governed by the porous medium equation for isothermal
flow. Annales Academi\ae Scientiarum Fennic\ae{\bf21}, 309--352 (1996)
\endOrigBibText
\bptok{structpyb}
\endbibitem

\bibitem{feng}
\begin{barticle}
\bauthor{\bsnm{Feng}, \binits{S.}},
\bauthor{\bsnm{Iscoe}, \binits{I.}},
\bauthor{\bsnm{Sepp\"{a}l\"{a}inen}, \binits{T.}}:
\batitle{A microscopic mechanism for the porous medium equation}.
\bjtitle{Stochastic Processes and their Applications}
\bvolume{66},
\bfpage{147}--\blpage{182}
(\byear{1997}).
\bid{doi={10.1016/S0304-4149(96)00121-4}, mr={1440397}}
\end{barticle}
%
\OrigBibText
Feng, S., Iscoe, I., Sepp\"al\"ainen, T.: A microscopic
mechanism for the porous medium equation. Stochastic Processes and
their Applications {\bf66}, 147--182 (1997)
\endOrigBibText
\bptok{structpyb}
\endbibitem

\bibitem{garra}
\begin{barticle}
\bauthor{\bsnm{Garra}, \binits{R.}},
\bauthor{\bsnm{Orsingher}, \binits{E.}}:
\batitle{Random flights governed by Klein-Gordon-type partial differential equations}.
\bjtitle{Stochastic Processes and their Applications}
\bvolume{124},
\bfpage{2171}--\blpage{2187}
(\byear{2014}).
\bid{doi={10.1016/j.spa.2014.02.004}, mr={3188352}}
\end{barticle}
%
\OrigBibText
Garra, R., Orsingher, E.: Random flights governed by
Klein-Gordon-type partial differential equations. Stochastic Processes
and their Applications {\bf124}, 2171--2187 (2014)
\endOrigBibText
\bptok{structpyb}
\endbibitem

\bibitem{garra2}
\begin{barticle}
\bauthor{\bsnm{Garra}, \binits{R.}},
\bauthor{\bsnm{Polito}, \binits{F.}},
\bauthor{\bsnm{Orsingher}, \binits{E.}}:
\batitle{Fractional Klein-Gordon equations and related stochastic processes}.
\bjtitle{Journal of Statistical Physics}
\bvolume{155},
\bfpage{777}--\blpage{809}
(\byear{2014}).
\bid{doi={10.1007/s10955-014-0976-0}, mr={3192184}}
\end{barticle}
%
\OrigBibText
Garra, R., Polito, F., Orsingher, E.: Fractional
Klein-Gordon equations and related stochastic processes. Journal of
Statistical Physics {\bf155}, 777--809 (2014)
\endOrigBibText
\bptok{structpyb}
\endbibitem

\bibitem{garra3}
\begin{barticle}
\bauthor{\bsnm{Garra}, \binits{R.}},
\bauthor{\bsnm{Orsingher}, \binits{E.}}:
\batitle{Random Flights Related to the Euler-Poisson-Darboux Equation}.
\bjtitle{Markov Processes and Related Fields}
\bvolume{22},
\bfpage{87}--\blpage{110}
(\byear{2016}).
\bid{mr={3523980}}
\end{barticle}
%
\OrigBibText
Garra, R., Orsingher, E.: Random Flights Related to
the Euler-Poisson-Darboux Equation. Markov Processes and Related Fields
{\bf22}, 87--110 (2016)
\endOrigBibText
\bptok{structpyb}
\endbibitem

\bibitem{getoor}
\begin{barticle}
\bauthor{\bsnm{Getoor}, \binits{R.K.}}:
\batitle{First passage times for symmetric stable processes in space}.
\bjtitle{Transactions of the American Mathematical Society}
\bvolume{101},
\bfpage{75}--\blpage{90}
(\byear{1961}).
\bid{doi={10.1090/S0002-9947-1961-0137148-5}, doi={\\10.2307/1993412}, mr={0137148}}
\end{barticle}
%
\OrigBibText
Getoor, R.K.: First passage times for symmetric stable
processes in space. Transactions of the American Mathematical Society
{\bf101}, 75--90 (1961)
\endOrigBibText
\bptok{structpyb}
\endbibitem

\bibitem{ghosh}
\begin{barticle}
\bauthor{\bsnm{Ghosh}, \binits{A.}},
\bauthor{\bsnm{Rastegar}, \binits{R.}},
\bauthor{\bsnm{Roitershtein}, \binits{A.}}:
\batitle{On a directionally reinforced random walk}.
\bjtitle{Proceedings of the American Mathematical Society}
\bvolume{142},
\bfpage{3269}--\blpage{3283}
(\byear{2014}).
\bid{doi={10.1090/S0002-9939-2014-12030-2}, mr={3223382}}
\end{barticle}
%
\OrigBibText
Ghosh, A., Rastegar, R., Roitershtein, A.: On a
directionally reinforced random walk. Proceedings of the American
Mathematical Society {\bf142}, 3269--3283 (2014)
\endOrigBibText
\bptok{structpyb}
\endbibitem

\bibitem{gr}
\begin{bbook}
\bauthor{\bsnm{Gradshteyn}, \binits{I.S.}},
\bauthor{\bsnm{Ryzhik}, \binits{I.M.}}:
\bbtitle{Tables of Integrals, Series, and Products},
\bedition{4}th edn.
\bpublisher{Academic Press},
\blocation{New York}
(\byear{1980}).
\bid{mr={0197789}}
\end{bbook}
%
\OrigBibText
Gradshteyn, I.S., Ryzhik, I.M.: Tables of Integrals,
Series, and Products. Fourth edition. Academic Press, New York (1980)
\endOrigBibText
\bptok{structpyb}
\endbibitem

\bibitem{huang}
\begin{barticle}
\bauthor{\bsnm{Huang}, \binits{Y.}}:
\batitle{Explicit Barenblatt profiles for fractional porous medium equations}.
\bjtitle{Bulletin of the London Mathematical Society}
\bvolume{46},
\bfpage{857}--\blpage{869}
(\byear{2014}).
\bid{doi={\\10.1112/blms/bdu045}, mr={3239623}}
\end{barticle}
%
\OrigBibText
Huang, Y.: Explicit Barenblatt profiles for fractional
porous medium equations. Bulletin of the London Mathematical Society
{\bf46}, 857--869 (2014)
\endOrigBibText
\bptok{structpyb}
\endbibitem

\bibitem{inoue}
\begin{barticle}
\bauthor{\bsnm{Inoue}, \binits{M.}}:
\batitle{A Markov process associated with a porous medium equation}.
\bjtitle{Proceedings of the Japan Academy, Series A}
\bvolume{60},
\bfpage{157}--\blpage{160}
(\byear{1989}).
\bid{doi={10.3792/\\pjaa.60.157}, mr={0758056}}
\end{barticle}
%
\OrigBibText
Inoue, M.: A Markov process associated with a porous
medium equation. Proceedings of the Japan Academy, Series A {\bf60},
157--160 (1989)
\endOrigBibText
\bptok{structpyb}
\endbibitem

\bibitem{inoue2}
\begin{barticle}
\bauthor{\bsnm{Inoue}, \binits{M.}}:
\batitle{Construction of diffusion processes associated with a porous medium equation}.
\bjtitle{Hiroshima Mathematical Journal}
\bvolume{19},
\bfpage{281}--\blpage{297}
(\byear{1989}).
\bid{mr={1027932}}
\end{barticle}
%
\OrigBibText
Inoue, M.: Construction of diffusion processes
associated with a porous medium equation. Hiroshima Mathematical
Journal {\bf19}, 281--297 (1989)
\endOrigBibText
\bptok{structpyb}
\endbibitem

\bibitem{inoue3}
\begin{barticle}
\bauthor{\bsnm{Inoue}, \binits{M.}}:
\batitle{Derivation of a porous medium equation from many Markovian particles and the propagation of chaos}.
\bjtitle{Hiroshima Mathematical Journal}
\bvolume{21},
\bfpage{85}--\blpage{110}
(\byear{1991}).
\bid{mr={1091433}}
\end{barticle}
%
\OrigBibText
Inoue, M.: Derivation of a porous medium equation
from many Markovian particles and the propagation of chaos. Hiroshima
Mathematical Journal {\bf21}, 85--110 (1991)
\endOrigBibText
\bptok{structpyb}
\endbibitem

\bibitem{jou}
\begin{barticle}
\bauthor{\bsnm{Jourdain}, \binits{B.}}:
\batitle{Probabilistic approximation for a porous medium equation}.
\bjtitle{Stochastic Processes and their Applications}
\bvolume{89},
\bfpage{81}--\blpage{99}
(\byear{2000}).
\bid{doi={10.1016/\\S0304-4149(00)00014-4}, mr={1775228}}
\end{barticle}
%
\OrigBibText
Jourdain, B.: Probabilistic approximation for a porous
medium equation. Stochastic Processes and their Applications {\bf89},
81--99 (2000)
\endOrigBibText
\bptok{structpyb}
\endbibitem

\bibitem{karat}
\begin{bbook}
\bauthor{\bsnm{Karatzas}, \binits{I.}},
\bauthor{\bsnm{Shreve}, \binits{S.E.}}:
\bbtitle{Brownian Motion and Stochastic Calculus},
\bedition{2}nd edn.
\bpublisher{Springer},
\blocation{New-York}
(\byear{1998}).
\bid{doi={10.1007/978-1-4684-0302-2}, mr={0917065}}
\end{bbook}
%
\OrigBibText
Karatzas, I., Shreve, S.E.: Brownian Motion and
Stochastic Calculus, 2nd Edition. Springer-Verlag, New-York (1998)
\endOrigBibText
\bptok{structpyb}
\endbibitem

\bibitem{lecaer}
\begin{barticle}
\bauthor{\bparticle{Le} \bsnm{Ca\"{e}r}, \binits{G.}}:
\batitle{A Pearson random walk with steps of uniform orientation and Dirichlet distributed lengths}.
\bjtitle{Journal of Statistical Physics}
\bvolume{140},
\bfpage{728}--\blpage{751}
(\byear{2010}).
\bid{doi={10.1007/s10955-010-0015-8}, mr={2670739}}
\end{barticle}
%
\OrigBibText
Le Ca\"er, G.: A Pearson random walk with steps of
uniform orientation and Dirichlet distributed lengths. Journal of
Statistical Physics {\bf140}, 728--751(2010)
\endOrigBibText
\bptok{structpyb}
\endbibitem

\bibitem{lecaer2}
\begin{barticle}
\bauthor{\bparticle{Le} \bsnm{Ca\"{e}r}, \binits{G.}}:
\batitle{A new family of solvable Pearson--Dirichlet random walks}.
\bjtitle{Journal of Statistical Physics}
\bvolume{144},
\bfpage{23}--\blpage{45}
(\byear{2011}).
\bid{doi={10.1007/s10955-011-\\0245-4}, mr={2820033}}
\end{barticle}
%
\OrigBibText
Le Ca\"er, G.: A new family of solvable
Pearson--Dirichlet random walks. Journal of Statistical Physics {\bf
144}, 23--45 (2011)
\endOrigBibText
\bptok{structpyb}
\endbibitem

\bibitem{lp}
\begin{barticle}
\bauthor{\bsnm{Letac}, \binits{G.}},
\bauthor{\bsnm{Piccioni}, \binits{M.}}:
\batitle{Dirichlet random walks}.
\bjtitle{Journal of Applied Probability}
\bvolume{51},
\bfpage{1081}--\blpage{1099}
(\byear{2014}).
\bid{doi={10.1239/jap/1421763329}, mr={3301290}}
\end{barticle}
%
\OrigBibText
Letac, G., Piccioni, M.: Dirichlet random walks. Journal
of Applied Probability {\bf51}, 1081--1099 (2014)
\endOrigBibText
\bptok{structpyb}
\endbibitem

\bibitem{phil}
\begin{barticle}
\bauthor{\bsnm{Philipowski}, \binits{R.}}:
\batitle{Interacting diffusions approximating the porous medium equation and propagation of chaos}.
\bjtitle{Stochastic Processes and their Applications}
\bvolume{117},
\bfpage{526}--\blpage{538}
(\byear{2007}).
\bid{doi={10.1016/j.spa.2006.09.003}, mr={2305385}}
\end{barticle}
%
\OrigBibText
Philipowski, R.: Interacting diffusions approximating
the porous medium equation and propagation of chaos. Stochastic
Processes and their Applications {\bf117}, 526--538 (2007)
\endOrigBibText
\bptok{structpyb}
\endbibitem

\bibitem{stadje}
\begin{barticle}
\bauthor{\bsnm{Stadje}, \binits{W.}}:
\batitle{The exact probability distribution of a two-dimensional random walk}.
\bjtitle{Journal of Statistical Physics}
\bvolume{46},
\bfpage{207}--\blpage{216}
(\byear{1987}).
\bid{doi={10.1007/\\BF01010341}, mr={0887245}}
\end{barticle}
%
\OrigBibText
Stadje W.: The exact probability distribution of a
two-dimensional random walk. Journal of Statistical
Physics $\mathbf{46}$, 207--216 (1987)
\endOrigBibText
\bptok{structpyb}
\endbibitem

\bibitem{stadje2}
\begin{barticle}
\bauthor{\bsnm{Stadje}, \binits{W.}}:
\batitle{Exact probability distributions for noncorrelated random walk models}.
\bjtitle{Journal of Statistical Physics}
\bvolume{56},
\bfpage{415}--\blpage{435}
(\byear{1989}).
\bid{doi={10.1007/\\BF01044444}, mr={1009508}}
\end{barticle}
%
\OrigBibText
Stadje, W.: Exact probability distributions for
noncorrelated random walk models. Journal of Statistical
Physics $\mathbf{56}$, 415--435 (1989)
\endOrigBibText
\bptok{structpyb}
\endbibitem

\bibitem{stan}
\begin{barticle}
\bauthor{\bsnm{Stan}, \binits{D.}},
\bauthor{\bparticle{del} \bsnm{Teso}, \binits{F.}},
\bauthor{\bsnm{Vazquez}, \binits{J.L.}}:
\batitle{Finite and infinite speed of propagation for porous medium equations with fractional pressure}.
\bjtitle{C.R. Acad. Sci. Paris, Ser. I}
\bvolume{352},
\bfpage{123}--\blpage{128}
(\byear{2014}).
\bid{doi={10.1016/j.crma.2013.12.003}, mr={3151879}}
\end{barticle}
%
\OrigBibText
Stan, D., del Teso, F., Vazquez, J.L.: Finite and
infinite speed of propagation for porous medium equations with
fractional pressure. C.R. Acad. Sci. Paris, Ser. I {\bf352}, 123--128 (2014)
\endOrigBibText
\bptok{structpyb}
\endbibitem

\bibitem{vazquez}
\begin{bbook}
\bauthor{\bsnm{Vazquez}, \binits{J.L.}}:
\bbtitle{The Porous Medium Equation. Mathematical Theory}.
\bpublisher{Oxford Univ. Press},
\blocation{Oxford}
(\byear{2007}).
\bid{mr={2286292}}
\end{bbook}
%
\OrigBibText
Vazquez, J.L.: The Porous Medium Equation.
Mathematical Theory. Oxford Univ. Press, Oxford (2007)
\endOrigBibText
\bptok{structpyb}
\endbibitem

\bibitem{vazquez2}
\begin{barticle}
\bauthor{\bsnm{Vazquez}, \binits{J.L.}}:
\batitle{Barenblatt solutions and asymptotic behaviour for a nonlinear fractional heat equation of porous medium type}.
\bjtitle{Journal of the European Mathematical Society}
\bvolume{16},
\bfpage{769}--\blpage{803}
(\byear{2014}).
\bid{doi={10.4171/JEMS/446}, mr={3191976}}
\end{barticle}
%
\OrigBibText
Vazquez, J.L.: Barenblatt solutions and asymptotic
behaviour for a nonlinear fractional heat equation of porous medium
type. Journal of the European Mathematical Society {\bf16}, 769--803 (2014)
\endOrigBibText
\bptok{structpyb}
\endbibitem

\end{thebibliography}
\end{document}